\author{Carlo Sanna}
\address{Universit\`a degli Studi di Torino\\Department of Mathematics\\Turin, Italy}
\email{carlo.sanna.dev@gmail.com}
\keywords{Arithmetic progressions, geometric progressions, covering problems}
\subjclass[2010]{Primary: 11B25, Secondary: 11A99}
\title{Covering an arithmetic progression with \mbox{geometric progressions} and vice versa}
\newtheorem{thm}{Theorem}[section]
\newtheorem{lem}[thm]{Lemma}
\DeclareMathAlphabet{\mathpzc}{OT1}{pzc}{m}{it}
\def\N{\mathbf{N}}
\def\Q{\mathbf{Q}}
\def\A{\mathpzc{A}}
\def\G{\mathpzc{G}}
\def\B{\mathpzc{B}}
\def\S{\mathpzc{S}}
\begin{document}

\begin{abstract}
We show that there exists a positive constant $C$ such that the following holds: Given an infinite arithmetic progression $\A$ of real numbers and a sufficiently large integer $n$ (depending on $\A$), there needs at least $Cn$ geometric progressions to cover the first $n$ terms of $\A$.
A similar result is presented, with the role of arithmetic and geometric progressions reversed.
\end{abstract}

\maketitle

\section{Introduction}

Arithmetic and geometric progressions are always an active research topic in \mbox{Number Theory}.
In particular, problems concerning arithmetic progressions and covering, mostly over the integers, are well studied (for example, see \cite{Sun95}).
For $v \geq 0$ and $d > 0$, let
\begin{equation*}
\A(v,d) := \{v, v + d, v + 2d, v + 3d, \ldots\}
\end{equation*}
be the arithmetic progression with first term $v$ and common difference $d$.
Also, for $u > 0$ and $q > 1$, let
\begin{equation*}
\G(u,q) := \{u, uq, uq^2, uq^3, \ldots\}
\end{equation*}
be the geometric progression with first term $u$ and ratio $q$.
Furthermore, for a positive integer $n$, let $\A^{(n)}$, respectively $\G^{(n)}$, be the set of the first $n$ terms of the arithmetic progression $\A$, respectively the geometric progression $\G$.
Now, for a finite set $\S$ of nonnegative real numbers, denote by $g(\S)$ the least positive integer $h$ such that there exist $h$ geometric progressions $\G_1, \ldots, \G_h$ covering $\S$, i.e., $\S \subseteq \bigcup_{i=1}^h \G_i$.
Similarly, denote by $a(\S)$ the least positive integer $h$ such that there exist $h$ arithmetic progressions covering $\S$.
Since given any two distinct nonnegative real numbers there is an arithmetic progression, respectively a geometric progression, containing them; it follows easily that $a(\S), g(\S) \leq (|\S| + 1) / 2$.
On the other hand, obviously, $a(\A^{(n)}) = g(\G^{(n)}) = 1$ for each arithmetic progression $\A$ and each geometric progression $\G$.
We are interested in lower bounds for $g(\A^{(n)})$ and $a(\G^{(n)})$.
Our first result is the following theorem.

\begin{thm}\label{thm:geometric_cover}
There exists a positive constant $C_1$ such that for each arithmetic progression $\A = \A(v,d)$ it results $g(\A^{(n)}) \geq C_1 n$ for $n$ sufficiently large (how large depending only on $v / d$).
In particular, we can take $C_1 = 1 / \pi^2$.
\end{thm}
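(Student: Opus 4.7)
The plan is to split into two cases depending on whether $v/d$ is rational.

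First, I would dispose of the irrational case. If $v/d \notin \Q$ and $(v+id)(v+kd) = (v+jd)^2$ with $0 \leq i < j < k \leq n-1$, expanding yields $(v/d)(i + k - 2j) = j^2 - ik$. The right-hand side is an integer, so irrationality of $v/d$ forces $i+k = 2j$ and $j^2 = ik$, which combined give $(i-k)^2 = 0$, a contradiction. Thus $\A^{(n)}$ has no three-term geometric subprogression, so $|\G \cap \A^{(n)}| \leq 2$ for every geometric progression $\G$, and $g(\A^{(n)}) \geq \lceil n/2 \rceil$, which is much larger than $n/\pi^2$.

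In the rational case, I would write $v/d = p/q$ in lowest terms and rescale by $d/q$ so that $\A^{(n)} = \{p, p+q, p+2q, \ldots, p+(n-1)q\} \subset \N$ with $\gcd(p, q) = 1$ (the degenerate case $v = 0$ gives $g(\A^{(n)}) = \infty$ since no GP contains $0$). The next step is a size bound on a single GP: if $\G(u, r)$ meets $\A^{(n)}$ in elements $x_1 < \cdots < x_k$ written as $x_j = u r^{e_j}$ with $e_1 < \cdots < e_k$, then $x_k/x_1 \geq r^{k-1}$ combined with $x_1 \geq v$ and $x_k \leq v + (n-1)d$ yields $r^{k-1} \leq 1 + (n-1)d/v$. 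When $r = s/t$ is rational in lowest terms, integrality of the $k$ terms forces $t^{k-1} \mid u$ and hence $s^{k-1} \leq x_k \leq v + (n-1)d$.

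The crux is a careful counting of GPs of each possible size. For each $k \geq 2$ and each coprime pair $(s, t)$ with $s \geq 2$, the number of GPs $\G(u, s/t)$ having at least $k$ terms in $\A^{(n)}$ is at most $\lfloor (v + (n-1)d)/s^{k-1} \rfloor$. Summed over coprime pairs this involves $\sum_{s \geq 2} \varphi(s)/s^{k-1}$, and the constant $1/\pi^2$ should emerge via $\sum_{m \geq 1} 1/m^2 = \pi^2/6$ (equivalently, $6/\pi^2$ is the density of coprime pairs of integers).

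The main obstacle is obtaining the constant exactly $1/\pi^2$. A naive max-size estimate gives only $g(\A^{(n)}) \geq n/\log_2 n$, asymptotically weaker than the claimed linear bound. To bridge the gap I would argue that only a negligible fraction of $\A^{(n)}$ can be covered by GPs of size exceeding roughly $\pi^2$: the \emph{long} GPs are too few to contribute more than $o(n)$ elements, so most of $\A^{(n)}$ must be swept up by \emph{short} GPs of bounded size, which yields the desired linear lower bound with constant $1/\pi^2$.
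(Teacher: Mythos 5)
Your proposal has genuine gaps in both of its cases. In the irrational case, your algebraic identity only excludes three terms of $\A^{(n)}$ satisfying $x_1x_3=x_2^2$, i.e.\ sitting at equally spaced exponents of the covering progression; but three elements of $\A^{(n)}$ lying on $\G(u,q)$ are $uq^{e_1},uq^{e_2},uq^{e_3}$ with arbitrary exponents, and then no such relation is available. Concretely, let $q$ be the positive root of $x^2+x-4$ and take $v=(\sqrt{17}+3)/4$, $d=1$: since $v(q-1)=1$ and $q^2+q+1=5$, the progression $\G(v,q)$ contains $v$, $v+1=vq$ and $v+5=vq^3$, three terms of $\A(v,1)$, even though $v/d$ is irrational. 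So your bound $|\G\cap\A^{(n)}|\leq 2$ is false; the paper only gets $|\A\cap\G|\leq 6$ here, and it does so by quoting the Dubickas--Jankauskas intersection theorem (a nontrivial input your sketch never invokes) together with its Lemma on ratios of the form $r^{1/m}$, which shows that three intersection points with such a ratio force $v/d$ to be rational.

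In the rational case the crucial last step is missing, and the claim you lean on is not true: geometric progressions with more than a bounded number of terms can cover a positive proportion of $\A^{(n)}$, not $o(n)$. For instance, with $\A^{(n)}=\{1,2,\ldots,n\}$, every integer $c\leq n/2^{9}$ lies on the ratio-$2$ progression $\{c\cdot 2^{j}\}$, which has at least ten terms below $n$; so a constant fraction of the AP is coverable by ``long'' progressions, and the assertion that most of $\A^{(n)}$ must be swept up by short ones does not follow --- your own honest estimate stalls at $n/\log_2 n$. (There is also a smaller gap: the covering progressions need not have rational ratio, so one must first reduce ratios that are roots of rationals, such as $\sqrt{2}$, to their rational-ratio subprogressions.) Moreover, the constant $1/\pi^2$ in the paper does not come from the density of coprime pairs: it is the squarefree density $6/\pi^2$ (Prachar's theorem on squarefree numbers in the progression $a+bh$, where $v/d=a/b$) divided by the Dubickas--Jankauskas bound $6$. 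The key device you are missing is the paper's restriction to $\B^{(n)}=\{v+dh\in\A^{(n)}: a+bh \mbox{ squarefree}\}$: any progression meeting $\A$ in more than six points has ratio a root of a rational $r>1$, its relevant terms are $bsr^{j}$, and since $\upsilon_p(bsr^{j})$ is strictly monotone in $j$ for a prime $p$ with $\upsilon_p(r)\neq 0$, it can hit $\B^{(n)}$ at most twice. Without that trick (or some substitute for it and for the intersection theorem), the proposed counting cannot reach a linear bound, let alone the constant $1/\pi^2$.
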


Regarding a lower bound for $a(\G^{(n)})$, with $\G = \G(u,q)$, the situation is a little bit different.
In fact, we need to distinguish according to whether $q$ is a root of a rational number $>1$ or not.

\begin{thm}\label{thm:arithmetic_rational_cover}
Let $q = r^{1/m}$ with $r > 1$ a rational number and $m$ a positive integer such that $q^{m^\prime}$ is irrational for any positive integer $m^\prime < m$.
Then $a(\G^{(n)}) \leq m$ for each geometric progression $\G = \G(u,q)$ and each integer $n \geq 1$, with equality if $n \geq 2m$.
\end{thm}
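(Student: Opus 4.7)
The plan is to prove $a(\G^{(n)}) \leq m$ for every $n \geq 1$ together with the matching lower bound $a(\G^{(2m)}) \geq m$; since $\G^{(2m)} \subseteq \G^{(n)}$ for $n \geq 2m$, this gives equality in that range.

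\emph{Upper bound.} I would partition the exponents $\{0, 1, \ldots, n-1\}$ into the $m$ residue classes modulo $m$. For each class $c$, the corresponding terms of $\G^{(n)}$ have the form $uq^c r^k$ (varying $k$), that is, finitely many positive rational multiples of the fixed positive number $uq^c$. Writing these rationals over a common denominator $M$, all such terms lie in the arithmetic progression $\A(0,\, uq^c/M)$. Hence $\G^{(n)}$ is covered by $m$ APs.

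\emph{Lower bound.} The key claim is that any AP contains at most two terms of $\G^{(2m)}$; once this is established, $h$ APs cover at most $2h$ points, forcing $h \geq m$. I would first verify that the hypothesis implies $[\Q(q):\Q] = m$, so that $\{1, q, \ldots, q^{m-1}\}$ is a $\Q$-basis of $\Q(q)$. Indeed, the minimal polynomial $g$ of $q$ over $\Q$ divides $x^m - r$, so its roots have the form $\zeta q$ with $\zeta^m = 1$. The constant term of $g$ is then $\pm \omega\, q^{\deg g}$ for some root of unity $\omega$; as it must be rational while $q^{\deg g} > 0$ is real, $\omega \in \{\pm 1\}$, whence $q^{\deg g} \in \Q$ and the hypothesis forces $\deg g = m$.

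Now suppose three terms $uq^{a_1} < uq^{a_2} < uq^{a_3}$ of $\G$ lie in a single AP. Then $\lambda := (q^{a_3} - q^{a_1})/(q^{a_2} - q^{a_1})$ is rational, and setting $s = a_2 - a_1$, $t = a_3 - a_1$ yields the identity $q^t - \lambda q^s = 1 - \lambda$. Using $q^s = r^{\lfloor s/m \rfloor}\, q^{s \bmod m}$ and likewise for $q^t$, I would expand this identity in the basis $\{1, q, \ldots, q^{m-1}\}$. A short case analysis on $s \bmod m$ and $t \bmod m$ shows the identity is satisfiable only when $m \mid s$ and $m \mid t$, so $a_1, a_2, a_3$ share a common residue modulo $m$. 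Since $\{0, 1, \ldots, 2m-1\}$ contains only two exponents per residue class, no three terms of $\G^{(2m)}$ can lie in a single AP, as claimed.

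The main obstacle is the preliminary step $[\Q(q) : \Q] = m$, which must rule out the possibility of a degree drop coming from a nontrivial factorization of $x^m - r$ over $\Q$, together with the subsequent basis-coefficient case analysis; both are routine in principle but need to be organized carefully to keep track of the residues $s \bmod m$ and $t \bmod m$.
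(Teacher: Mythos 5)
Your proposal is correct, and its upper bound is exactly the paper's: split the exponents into residue classes modulo $m$ and note that each class yields rational multiples of $u q^c$, hence fits into a single progression $\A(0, uq^c/M)$ (the paper takes $M = r_2^n$ explicitly). For the lower bound the algebraic core is the same as the paper's — rationality of the ratio $\lambda$ coming from three progression terms, plus linear independence of $1, q, \ldots, q^{m-1}$ over $\Q$ (which the paper simply asserts "follows at once", whereas you prove $[\Q(q):\Q]=m$ via the constant-term/root-of-unity argument; that step is fine) — but your combinatorial finish is genuinely different and cleaner. You prove the sharp local statement that any arithmetic progression meets $\G^{(2m)}$ in at most two points (your case analysis does close: the only non-contradictory case of $q^t-\lambda q^s = 1-\lambda$ is $m \mid s$, $m \mid t$, using $\lambda \neq 0,1$ from the strict ordering, and each residue class contributes only two exponents in $\{0,\ldots,2m-1\}$), so $2m$ points force at least $m$ progressions, and monotonicity of $a(\cdot)$ under inclusion transfers the bound to all $n \geq 2m$. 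The paper instead argues directly on a cover of $\G^{(n)}$, partitioning the covering progressions into those containing two terms with congruent exponents (which are then confined to a single residue class) and those meeting $\G^{(n)}$ in at most two points, and counting via the set $R$ of residues so covered together with $\lfloor n/m\rfloor \geq 2$. Your route buys brevity and a crisp "at most two per AP" lemma at $n=2m$; the paper's bookkeeping gives slightly more structural information about arbitrary covers of $\G^{(n)}$ (each covering progression either sticks to one residue class mod $m$ or meets the geometric progression in at most two points), in the spirit of its Lemma 2.2. The only places where your write-up is a sketch rather than a proof are the case analysis and the degree computation, and both do go through as you indicate.
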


\begin{thm}\label{thm:arithmetic_irrational_cover}
There exists a positive constant $C_2$ such that if $q \neq r^{1/m}$ for all rationals $r > 1$ and all positive integers $m$, then $a(\G^{(n)}) \geq C_2 n$ for each geometric progression $\G$ and each integer $n \geq 1$.
In particular, we can take $C_2 = 1 / 6$.
\end{thm}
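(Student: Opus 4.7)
My plan is to prove the pointwise bound $|\A \cap \G| \leq 6$ for every arithmetic progression $\A$ and every geometric progression $\G = \G(u,q)$ satisfying the hypothesis. From this, any covering $\G^{(n)} \subseteq \bigcup_{i=1}^{h}\A_i$ yields $n \leq 6h$, whence $a(\G^{(n)}) \geq n/6$, and the theorem holds with $C_2 = 1/6$.

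To set things up, suppose $uq^{a_1} < \cdots < uq^{a_s}$ all lie in $\A(v,d)$. Since each $uq^{a_i} - v$ is a nonnegative integer multiple of $d$, every pairwise difference $uq^{a_i} - uq^{a_j}$ lies in $d\mathbb{Z}$; dividing, the quotients $(q^{b_i}-1)/(q^{b_2}-1)$ with $b_i := a_i - a_1$ are positive rationals for every $i \geq 2$. Setting $\beta := q^{b_2}$ and $t_i := (q^{b_i}-1)/(\beta-1) \in \mathbb{Q}_{>0}$, this becomes $q^{b_i} = 1 + t_i(\beta-1)$ for $i = 2, \ldots, s$, with the $t_i$'s pairwise distinct. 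We may assume $\beta \notin \mathbb{Q}$, otherwise $q = \beta^{1/b_2}$ is a root of a rational greater than $1$. The workhorse is multiplicativity: expanding
\[
q^{b_i + b_j} = q^{b_i}\, q^{b_j} = 1 + (t_i+t_j)(\beta-1) + t_i t_j (\beta-1)^2,
\]
the identity $b_i + b_j = b_k$ forces $\beta - 1 = (t_k - t_i - t_j)/(t_i t_j) \in \mathbb{Q}$, and $b_i + b_j = b_k + b_\ell$ with $\{i,j\} \neq \{k,\ell\}$ forces $\beta \in \mathbb{Q}$ too (using distinctness of the $t$'s). Both contradict $\beta \notin \mathbb{Q}$, so $\{b_2, \ldots, b_s\}$ must be sum-free and Sidon. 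Expansions of triple products $q^{b_i} q^{b_j} q^{b_k}$, compared on equal triple sums, impose further quadratic polynomial constraints on $\beta$ over $\mathbb{Q}$.

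The main obstacle is converting these additive-algebraic constraints into the numerical bound $s \leq 6$, since Sidon sets in the positive integers can be arbitrarily large. A useful auxiliary tool here is the linear recurrence $x_{i+1} = (1+r_i)\,x_i - r_i\, x_{i-1}$ satisfied by $x_i := q^{b_{i+1}}$, with $r_i \in \mathbb{Q}$ the ratio of consecutive AP-gaps; combined with the triple-product identities, it severely restricts the algebraic degree of $\beta$. I would finish by a case analysis on the gap sequence $c_j := b_{j+1} - b_j$, $j = 1, \ldots, 6$, showing that any consistent choice of $(c_j)$ and rationals $(r_j, t_i)$ compatible with $s \geq 7$ forces $\beta$ to satisfy a polynomial over $\mathbb{Q}$ whose only real root greater than $1$ is rational, contradicting $\beta \notin \mathbb{Q}$.
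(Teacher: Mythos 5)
Your reduction from the covering statement to a pointwise intersection bound is exactly the paper's: if $|\A \cap \G| \leq 6$ for every arithmetic progression $\A$, then any cover of $\G^{(n)}$ by $h$ arithmetic progressions gives $n \leq 6h$, hence $a(\G^{(n)}) \geq n/6$. The paper, however, does not prove the bound $|\A \cap \G| \leq 6$; it quotes it as a known theorem of Dubickas and Jankauskas \cite[Theorems 3 and 4]{DJ10}, and that result is the entire content of the matter. Your proposal attempts to reprove it, and this is where there is a genuine gap: the sum-free and Sidon conditions you derive on the exponent differences $b_2 < \cdots < b_s$ are correct necessary conditions, but, as you yourself note, they cannot bound $s$ by any absolute constant, and the remainder of your argument (triple-product identities, the recurrence in the $r_i$'s, a ``case analysis on the gap sequence'') is a plan, not a proof. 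Worse, the mechanism you rely on --- extracting a polynomial relation for $\beta$ over $\Q$ --- only fires when an additive coincidence $b_i+b_j=b_k$ or $b_i+b_j=b_k+b_\ell$ (or a triple-sum coincidence) actually occurs among the exponents; for a generic Sidon-type exponent set no such coincidence exists, and then your method produces no constraint on $\beta$ whatsoever, so there is no basis for the claimed contradiction when $s \geq 7$.

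The deeper obstruction is the quadratic case, which is precisely what makes the Dubickas--Jankauskas theorem nontrivial. Take $q$ a quadratic irrational that is not a root of a rational, e.g.\ $q = 1 + \sqrt{2}$. Then every power $q^{b}$ lies in the two-dimensional $\Q$-vector space $\Q + \Q\sqrt{2}$, so your structural conclusion that all $q^{b_i}$ lie in $\Q + \Q\beta$ is automatically satisfied and ``restricting the algebraic degree of $\beta$'' yields nothing: no argument based on forcing $\beta$ to satisfy a rational polynomial can succeed, because $\beta$ genuinely is a quadratic irrational. Ruling out seven terms of such a $\G(u,q)$ in one arithmetic progression requires a different idea (in \cite{DJ10} it involves working with the conjugate embedding of $\Q(q)$ and growth/size comparisons), and nothing in your sketch engages with it. So either cite \cite{DJ10} for the bound $|\A\cap\G|\leq 6$, as the paper does --- in which case your proof is complete and identical to the paper's --- or supply an actual proof of that bound, which your current outline does not do.
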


A natural question, open to us, is the evaluation of the best constants $C_1$ and $C_2$ in Theorem \ref{thm:geometric_cover} and Theorem \ref{thm:arithmetic_irrational_cover},
i.e., to find
\begin{equation*}
\inf_{\A} \liminf_{n \to \infty} \frac{g(\A^{(n)})}{n} \quad \mbox{and} \quad \inf_{\G} \liminf_{n \to \infty} \frac{a(\G^{(n)})}{n} ,
\end{equation*}
where $\A$ runs over all the arithmetic progressions and $\G$ runs over all the geometric progressions $\G = \G(u,q)$, with ratio $q$ not a root of a rational number $>1$.~
The results above give $1/\pi^2 \leq C_1 \leq 1/2$ and $1/6 \leq C_2 \leq 1/2$.

\subsection*{Notation}
Hereafter, $\N$ denotes the set of positive integers and $\N_0 := \N \cup \{0\}$.
The letter $p$ is reserved for prime numbers and $\upsilon_p(\cdot)$ denotes the $p\,$-adic valuation over the field of rational numbers $\Q$.

\section{Preliminaries}

The fundamental tool for our results is the following theorem of A. Dubickas and J. Jankauskas, regarding the intersection of arithmetic and geometric progressions \cite[Theorem 3 and 4]{DJ10}.

\begin{thm}\label{thm:intersection}
Suppose that the ratio $q > 1$ is not or the form $r^{1/m}$, with $r > 1$ a rational number and $m \in \N$, then $|\A \cap \G| \leq 6$ for each arithmetic progression $\A$ and each $\G=\G(u,q)$.
\end{thm}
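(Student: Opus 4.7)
My plan is to argue by contradiction: assume $|\A \cap \G| \geq 7$ and derive a relation $q^N \in \Q$ for some $N \in \N$. Since $q > 1$, such a relation makes $q$ the $N$-th root of a positive rational greater than $1$, contradicting the hypothesis. Enumerate the common elements as $\alpha_i := u q^{n_i} = v + k_i d$ for $i = 1, \ldots, 7$, with $n_1 < \cdots < n_7$ and $k_1 < \cdots < k_7$ in $\N_0$, and set $e_i := n_i - n_1$. For each $i = 2, \ldots, 7$,
\begin{equation*}
\frac{q^{e_i} - 1}{q^{e_2} - 1} \;=\; \frac{\alpha_i - \alpha_1}{\alpha_2 - \alpha_1} \;=\; \frac{k_i - k_1}{k_2 - k_1} \;=:\; r_i \;\in\; \Q_{>0},
\end{equation*}
so rearranging yields the system
\begin{equation*}
q^{e_i} - r_i\, q^{e_2} \;=\; 1 - r_i, \qquad i = 3, \ldots, 7.
\end{equation*}
The $i = 3$ instance is already a nontrivial polynomial relation with rational coefficients, so $q$ is algebraic.

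The next step is to convert these five equations into instances of the $S$-unit equation $x + y = 1$ in the number field $K := \Q(q)$. Take $S$ to be the set of places of $K$ consisting of the archimedean ones together with the finite places where $q$ or any of $r_3, \ldots, r_7$ is not a unit; dividing each equation by $1 - r_i$, one obtains solutions
\begin{equation*}
(x_i, y_i) \;=\; \biggl( \frac{r_i\, q^{e_2}}{r_i - 1},\; \frac{-q^{e_i}}{r_i - 1} \biggr) \;\in\; \bigl(\mathcal{O}_S^{*}\bigr)^{2}, \qquad i = 3, \ldots, 7.
\end{equation*}
Every $x_i$ is a rational multiple of the single element $q^{e_2}$, while $y_i$ is a rational multiple of $q^{e_i}$; by the hypothesis on $q$, distinct powers $q^{e_i}$ lie in distinct cosets of $\Q^{*}$ inside $K^{*}$.

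From here I would apply a pigeonhole argument on these cosets, combined with a quantitative version of Evertse's theorem on $S$-unit equations, or equivalently Baker-type lower bounds for linear forms in logarithms applied directly to the five relations above, to force two of the configurations to coincide modulo the diagonal $\Q^{*}$-action. Such a coincidence produces $q^{e_i - e_j} \in \Q^{*}$ for some $i \neq j$, which is the desired contradiction. The main obstacle is ensuring that a genuine multiplicative collision occurs among only five solutions rather than a vacuous equality of rational coefficients; this is the heart of the Dubickas--Jankauskas argument, resolved by a careful place-by-place analysis in $K$ that ultimately yields the sharp uniform bound $6$.
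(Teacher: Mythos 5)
First, a point of comparison: the paper does not prove this statement at all; it is imported verbatim from Dubickas and Jankauskas \cite[Theorems 3 and 4]{DJ10}, so your attempt has to stand entirely on its own. Your setup is fine as far as it goes: with seven common terms $uq^{n_i} = v + k_i d$ you correctly get the rational ratios $r_i = (q^{e_i}-1)/(q^{e_2}-1) = (k_i-k_1)/(k_2-k_1)$ and hence the relations $q^{e_i} - r_i q^{e_2} = 1 - r_i$, which in particular force $q$ to be algebraic (you implicitly use $r_i \neq 1$, i.e.\ $k_i > k_2$, when dividing by $r_i - 1$; say so).

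The genuine gap is the final step, which is the entire content of the theorem. You propose to bound the number of these relations by a quantitative $S$-unit theorem (Evertse) or Baker-type estimates and then pigeonhole ``modulo the diagonal $\Q^*$-action,'' but this cannot work as stated. First, the known quantitative bounds for the number of solutions of $x+y=1$ in $\mathcal{O}_S^*$ depend on the degree $[K:\Q]$ and on $|S|$; here the degree of $q$ can be as large as $e_7$, and $S$ must contain every prime dividing the numerators and denominators of the varying rationals $r_i/(r_i-1)$, so neither quantity is bounded and no uniform constant such as $6$ can emerge from a pigeonhole over only five solutions. Second, the collision you hope to force is provably absent: your five pairs $(x_i,y_i)$ are automatically pairwise distinct modulo $\Q^*$ in the second coordinate, precisely because $q^{e_i-e_j}\notin\Q$ by hypothesis, while in the first coordinate they are automatically all equivalent (each $x_i$ is a rational multiple of $q^{e_2}$); so ``coincidence modulo $\Q^*$'' either never happens or is vacuous, and the contradiction must instead come from showing that fewer than five powers of $q$ can lie on the rational affine line through $1$ and $q^{e_2}$ --- which is exactly the careful valuation/height analysis of Dubickas and Jankauskas that you name but do not supply. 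As written, the heart of the proof is a placeholder, so the argument establishes neither the bound $6$ nor any finite bound.
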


If the ratio $q$ of the geometric progression $\G$ is a root of a rational number $>1$, then, without further assumptions,
$|\A \cap \G|$ can take any nonnegative integer value, or even be infinite \cite[Theorem 1 and 2]{DJ10}.
However, we have the following:

\begin{lem}\label{lem:rational}
Suppose that $q = r^{1/m}$, with $r > 1$ rational and $m \in \N$ such that $q^{m^\prime}$ is irrational for any positive integer $m^\prime < m$.
If $|\A(v,d) \cap \G(u,q)| \geq 3$ then $v / d$ is rational and $u / d = s q^{-\ell}$ for some $s \in \Q$ and some $\ell \in \{0,1,\ldots,m-1\}$.
Moreover, for each $uq^k \in \A(v,d) \cap \G(u,q)$ it results $k \equiv \ell$ (mod $m$).
\end{lem}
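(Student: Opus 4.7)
The plan is to exploit the $\Q$-linear independence of $\{1, q, q^2, \ldots, q^{m-1}\}$, which under the stated hypothesis on $q$ follows because $x^m - r$ is then the minimal polynomial of $q$ over $\Q$ (for instance, by the Vahlen--Capelli irreducibility criterion: any nontrivial factorization of $x^m - r$ over $\Q$ would force $q^{m/p} \in \Q$ for some prime $p \mid m$, which is excluded by assumption; note that $r > 0$ rules out the $4 \mid m$, $-4c^4$ exceptional case).

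First, I pick three elements $uq^{k_1}, uq^{k_2}, uq^{k_3}$ of $\A(v,d) \cap \G(u,q)$, writing $uq^{k_i} = v + n_i d$ with pairwise distinct $n_i \in \N_0$. Dividing each $k_i$ by $m$ gives $k_i = m t_i + s_i$ with $t_i \in \N_0$ and $0 \leq s_i \leq m - 1$, so $q^{k_i} = r^{t_i} q^{s_i}$. Subtracting the equation for $i=1$ from those for $i = 2, 3$ eliminates $v$, and then taking the quotient of the two resulting equations eliminates $u/d$, producing
\[
r^{t_2} q^{s_2} - \alpha\, r^{t_3} q^{s_3} + (\alpha - 1) r^{t_1} q^{s_1} = 0, \qquad \alpha := \frac{n_2 - n_1}{n_3 - n_1} \in \Q,
\]
where $\alpha \notin \{0, 1\}$ because the $n_i$ are pairwise distinct; in particular the three coefficients $(\alpha - 1) r^{t_1},\ r^{t_2},\ -\alpha\, r^{t_3}$ are all nonzero.

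The key step is a short case analysis on the residues $s_1, s_2, s_3 \in \{0, \ldots, m-1\}$. By the linear independence of $\{1, q, \ldots, q^{m-1}\}$, if the three were pairwise distinct, the relation above would be a nontrivial dependence on three basis vectors; if exactly two of $s_1, s_2, s_3$ agreed, collapsing the corresponding terms would still leave a nontrivial dependence on two basis vectors with at least one nonzero coefficient. Each sub-case yields a contradiction, so $s_1 = s_2 = s_3 =: \ell$. Substituting $s_i = \ell$ into $u(r^{t_2} - r^{t_1}) q^\ell = (n_2 - n_1) d$ then yields $uq^\ell / d \in \Q$, i.e., $u/d = s\, q^{-\ell}$ for some $s \in \Q$; plugging this back into $u q^{k_1} = v + n_1 d$ gives $v/d \in \Q$.

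For the final assertion, given any further element $u q^k$ of the intersection, I apply the same reasoning to the triple $\{u q^{k_1}, u q^{k_2}, u q^k\}$, which still has at least three distinct elements, to conclude that $k, k_1, k_2$ share a common residue modulo $m$; since $k_1 \equiv \ell \pmod m$, this forces $k \equiv \ell \pmod m$. The only genuine obstacle is organizing the case analysis cleanly, which is essentially bookkeeping once the linear-independence backbone is in place.
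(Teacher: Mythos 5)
Your proof is correct and takes essentially the same route as the paper: eliminate $v$ and $u/d$ to obtain a rational linear relation among $q^{k_1}, q^{k_2}, q^{k_3}$, then invoke the $\Q$-linear independence of $1, q, \ldots, q^{m-1}$ coming from the minimality of $x^m - r$ (which the paper asserts without the Vahlen--Capelli details you supply). The only differences are in bookkeeping: the paper extracts just two exponents congruent modulo $m$ by pigeonhole and then obtains $v/d \in \Q$, $u/d = s q^{-\ell}$ and the ``moreover'' clause from the single identity $q^{k-\ell} = (t+h)/s \in \Q$, whereas you force all three residues to coincide by a case analysis and re-run the triple argument for the final claim --- both versions are sound.
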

\begin{proof}
Since $|\A(v,d) \cap \G(u,q)| \geq 3$, there exist $k_1, k_2, k_3 \in \N_0$ pairwise distinct and such that $uq^{k_i} = v + dh_i$, with $h_i \in \N_0$ ($i = 1,2,3$).
Set $t := v / d$ and $\xi := u / d$, so that $\xi q^{k_i} = t + h_i$ for each $i$.
Then, $q^{k_1} \neq q^{k_3}$ and
\begin{equation*}
\frac{q^{k_1} - q^{k_2}}{q^{k_1} - q^{k_3}} = \frac{h_1 - h_2}{h_1 - h_3} \in \Q ,
\end{equation*}
so that $q^{k_1}, q^{k_2}, q^{k_3}$ are linearly dipendent over $\Q$.
Being $x^m - r$ the minimal polynomial of $q$ over the rationals (as it follows at once from our assumptions), we have that $q^0, q^1, \ldots, q^{m-1}$ are linearly independent over $\Q$.
It follows that at least two of $k_1, k_2, k_3$ lie in the same class modulo $m$.
Without loss of generality, we can assume $k_1 \equiv k_2$ (mod $m$), so that $q^{k_1 - k_2}$ is rational.
Now
\begin{equation*}
t + h_1 = \xi q^{k_1} = q^{k_1 - k_2} \xi q^{k_2} = q^{k_1 - k_2} (t + h_2) ,
\end{equation*}
thus, on the one hand,
\begin{equation*}
t = \frac{h_1 - q^{k_1 - k_2} h_2}{q^{k_1 - k_2} - 1} \in \Q ,
\end{equation*}
and on the other hand $\xi = (t + h_1)q^{-k_1} = s q^{-\ell}$ for some $s \in \Q$ and some $\ell \in \{0,1,\ldots,m-1\}$ such that $\ell \equiv k_1$ (mod $m$).
In conclusion, for each $uq^k \in \A(v,d) \cap \G(u,q)$ we have $\xi q^k = t + h$ for some $h \in \N_0$, so $q^{k-\ell} = (t + h) / s \in \Q$ and necessarily $k \equiv \ell$ (mod $m$).
\end{proof}

Finally, we need to state the following lemma about the asymptotic density of squarefree integers in an arithmetic progression \cite{Pra58}.

\begin{lem}\label{lem:squarefree}
Let $a, b$ be integers with $b \geq 1$ and $\gcd(a,b)=1$. Then
\begin{equation*}
|\{k \in \N_0 : k \leq x \emph{ and } a + bk \emph{ is squarefree}\}| \sim \frac{6}{\pi^2} \prod_{p \mid b} \left(1-\frac1{p^2}\right)^{-1} x
\end{equation*}
as $x \to \infty$.
\end{lem}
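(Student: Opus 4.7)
The plan is to apply the classical Möbius sieve for squarefree integers. Using the identity $\mu^2(n) = \sum_{d^2 \mid n}\mu(d)$ and interchanging summations, I would write
\begin{equation*}
S(x) := \sum_{\substack{k \in \N_0 \\ k \leq x}} \mu^2(a+bk) = \sum_{d \geq 1}\mu(d)\, N_d(x),
\end{equation*}
where $N_d(x) := |\{k \in \N_0 : k \leq x,\; d^2 \mid a+bk\}|$. The first task is to evaluate $N_d(x)$. The congruence $bk \equiv -a \pmod{d^2}$ has a solution if and only if $\gcd(d^2,b)$ divides $a$, and since $\gcd(a,b) = 1$ this reduces to $\gcd(d,b) = 1$. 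In that case the solutions in $k$ form a single residue class modulo $d^2$, so $N_d(x) = x/d^2 + O(1)$, while $N_d(x) = 0$ whenever $\gcd(d,b) > 1$.

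Next I would truncate the outer sum. The divisibility $d^2 \mid a+bk \leq a+bx$ forces $d \leq \sqrt{a+bx}$, so the sum is a finite one. Splitting it at a parameter $z$, the contribution of $d \leq z$ is $x \sum_{d \leq z,\, \gcd(d,b) = 1}\mu(d)/d^2 + O(z)$, while the tail $z < d \leq \sqrt{a+bx}$ contributes at most $(a+bx)\sum_{d > z} 1/d^2 = O(x/z)$. Completing the main sum to all $d \geq 1$ introduces another $O(x/z)$ error, and the choice $z = \sqrt{x}$ balances everything to give
\begin{equation*}
S(x) = x \sum_{\gcd(d,b) = 1} \frac{\mu(d)}{d^2} + O(\sqrt{x}).
\end{equation*}
The leading constant factorises as
\begin{equation*}
\sum_{\gcd(d,b)=1}\frac{\mu(d)}{d^2} = \prod_{p \nmid b}\left(1-\frac{1}{p^2}\right) = \frac{1}{\zeta(2)}\prod_{p \mid b}\left(1-\frac{1}{p^2}\right)^{-1} = \frac{6}{\pi^2}\prod_{p \mid b}\left(1-\frac{1}{p^2}\right)^{-1},
\end{equation*}
which is exactly the asymptotic claimed in the lemma.

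The one step that requires a bit of care is the tail estimate, where the $O(1)$ error per $d$ from the floor function and the $O(x/d^2)$ contribution from the untruncated terms must be balanced against each other; everything else is routine Möbius inversion together with the Euler product evaluation $\sum_{d \geq 1}\mu(d)/d^2 = 1/\zeta(2)$. Sharper error terms are available in the literature, but the lemma only asserts an asymptotic equivalence, so the crude bound $O(\sqrt{x})$ is more than sufficient.
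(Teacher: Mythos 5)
Your sieve argument is correct, but it is worth pointing out that the paper does not prove this lemma at all: it is quoted as a known result from Prachar \cite{Pra58}, so there is no internal proof to match. What you supply is the standard Legendre--M\"obius argument, and it goes through: writing $\mu^2(a+bk)=\sum_{d^2\mid a+bk}\mu(d)$, the congruence $bk\equiv -a \pmod{d^2}$ is solvable exactly when $\gcd(d,b)=1$ (using $\gcd(a,b)=1$), in which case it picks out one residue class modulo $d^2$, so $N_d(x)=x/d^2+O(1)$; the truncation at $d\leq\sqrt{a+bx}$, the splitting at $z=\sqrt{x}$, and the Euler-product evaluation $\sum_{\gcd(d,b)=1}\mu(d)/d^2=\prod_{p\nmid b}\bigl(1-p^{-2}\bigr)=\tfrac{6}{\pi^2}\prod_{p\mid b}\bigl(1-p^{-2}\bigr)^{-1}$ are all sound, and they give the asymptotic with the error term $O(\sqrt{x})$, which is more than the mere $\sim$ that the lemma (and its use in the proof of Theorem \ref{thm:geometric_cover}) requires. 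Two cosmetic remarks: in the tail range $z<d\leq\sqrt{a+bx}$ the bound $N_d(x)\leq x/d^2+1$ also contributes an extra $O(\sqrt{x})$ from the $+1$ terms, which you should fold into the error (it is harmless); and since the lemma allows any integer $a$, the finitely many $k$ with $a+bk\leq 0$ should be discarded at the outset with an $O(1)$ loss (in the paper's application $a\geq 0$, so this never arises). Compared with simply citing Prachar, whose paper is really about the size of the least squarefree term and gives effective, uniform information in $a$ and $b$, your proof has the advantage of being short and self-contained for fixed $a,b$, which is all this application needs.
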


\section{Proof of Theorem \ref{thm:geometric_cover}}

Let $\A = \A(v,d)$ and $n \in \N$. 
For the sake of brevity, set $g := g(\A^{(n)})$ and let $\G_1, \ldots, \G_g$ be geometric progressions such that $\A^{(n)} \subseteq \bigcup_{i=1}^g \G_i$.
Suppose that $|\A \cap \G_i| \leq 6$ for $i=1,2,\ldots,g$. 
Then
\begin{equation*}
n = |\A^{(n)}| \leq \sum_{i=1}^g |\A^{(n)} \cap \G_i| \leq \sum_{i=1}^g |\A \cap \G_i| \leq 6g ,
\end{equation*}
with the result that $g \geq n/6 > n / \pi^2$.

Suppose now that there exists $i_0 \in \{1,2,\ldots,g\}$ such that $|\A \cap \G_{i_0}| > 6$.
For a moment, let $\G_{i_0} = \G(u,q)$.
It follows from Theorem \ref{thm:intersection} that $q = r^{1/m}$ for some rational number $r > 1$ and $m \in \N$.
In particular, we can assume that $q^{m^\prime}$ is irrational for any positive integer $m^\prime < m$.
Therefore, Lemma \ref{lem:rational} implies that $t := v / d$ is rational, $u / d = s q^{-\ell}$ with $s \in \Q$, $\ell \in \{0,1,\ldots,m-1\}$, and that for each $uq^k \in \A \cap \G_{i_0}$ we have $k \equiv \ell$ (mod $m$).
Since $t \geq 0$ is rational, we can write $t = a / b$, where $a \geq 0$ and $b \geq 1$ are relatively prime integers.
On the other hand, if $uq^k \in \A \cap \G_{i_0}$ then $k = mj + \ell$ and $uq^k = v + dh$ for some $j, h \in \N_0$.
As a consequence, $sr^j = t + h$ and $bsr^j = a + bh$.
Now we claim that there exist at most two $j \in \N_0$ such that $bsr^j$ is a squarefree integer.
In fact, since $r > 1$, there exists a prime $p$ such that $\upsilon_p(r) \neq 0$. 
So $\upsilon_p(bsr^j) = \upsilon_p(bs) + j \upsilon_p(r)$ is a strictly monotone function of $j$ and can take the values $0$ or $1$, which is a necessary condition for $bsr^j$ to be a squarefree integer, for at most two $j \in \N_0$.
Consequently, if we define
\begin{equation*}
\B^{(n)} := \{v + dh \in \A^{(n)} : a + bh \mbox{ is squarefree}\} ,
\end{equation*}
then $|\B^{(n)} \cap \G_{i_0}| \leq 2$. 
Note that the definition of $\B^{(n)}$ depends only on $v$, $d$ and $n$, so we can conclude that $|\B^{(n)} \cap \G_i| \leq 6$ for all $i=1,2,\ldots,g$.
In fact, on the one hand, if $|\A^{(n)} \cap \G_i| \leq 6$ then it is straightforward that $|\B^{(n)} \cap \G_i| \leq 6$, since $\B^{(n)} \subseteq \A^{(n)}$.
On the other hand, if $|\A^{(n)} \cap \G_i| > 6$ then we have proved that $|\B^{(n)} \cap \G_i| \leq 2$.
Now, Lemma \ref{lem:squarefree} yields
\begin{equation*}
|\B^{(n)}| \sim \frac{6}{\pi^2} \prod_{p \mid b} \left(1 - \frac1{p^2}\right)^{-1} n,
\end{equation*}
as $n \to \infty$, so that $|\B^{(n)}| \geq (6/\pi^2) n$ for $n$ sufficiently large, depending only on $a,b$, i.e., $t$.

In conclusion,
\begin{equation*}
\frac{6}{\pi^2} n \leq |\B^{(n)}| \leq \sum_{i=1}^g |\B^{(n)} \cap \G_i| \leq 6g ,
\end{equation*}
hence $g \geq n / \pi^2$, for sufficiently large $n$. This completes the proof.

\section{Proofs of Theorem \ref{thm:arithmetic_rational_cover} and \ref{thm:arithmetic_irrational_cover}}

Let $\G = \G(u,q)$ and $n \in \N$. For the sake of brevity, set $a := a(\G^{(n)})$ and let $\A_1, \ldots, \A_a$ be arithmetic progressions such that $\G^{(n)} \subseteq \bigcup_{i=1}^a \A_i$.
Suppose $q = r^{1/m}$, for a rational number $r > 1$ and $m \in \N$ such that $q^{m^\prime}$ is irrational for all positive integers $m^\prime < m$.
Since $r > 1$ is rational, we can write $r = r_1 / r_2$, where $r_1$ and $r_2$ are coprime positive integers.
Then, for $k=0,1,\ldots,n-1$, we have
\begin{equation*}
uq^k = uq^{(k \bmod m)} r^{\lfloor k / m \rfloor }  = 0 + \frac{uq^{(k \bmod m)}}{r_2^n} \cdot r_1^{\lfloor k / m \rfloor}r_2^{n  - \lfloor k / m \rfloor} \in \A(0, uq^{(k \bmod m)} / r_2^n) ,
\end{equation*}
so that $\G^{(n)} \subseteq \bigcup_{i=0}^{m-1} \A(0, uq^i / r_2^n)$ and $a \leq m$.
Suppose now $n \geq 2m$. 
We define the sets $J := \{1,2,\ldots,a\}$,
\begin{equation*}
J_1 := \{i \in J : \exists \, uq^{k_1}, uq^{k_2} \in \A_i \cap \G^{(n)} \mbox{ such that } k_1 \neq k_2, \; k_1 \equiv k_2 \!\!\! \pmod m\} ,
\end{equation*}
and $J_2 := J \setminus J_1$. 
Clearly, $\{J_1, J_2\}$ is a partition of $J$.
For $i \in J$, suppose that there exist $uq^{k_1}, uq^{k_2} \in \A_i \cap \G^{(n)}$ such that $k_1 < k_2$.
This implies that if $\A_i = \A(v,d)$ then 
\begin{equation*}
d = \tfrac1{s}(uq^{k_2} - uq^{k_1}),
\end{equation*}
for some positive integer $s$.
Furthermore, if $uq^k \in \A_i \cap \G^{(n)}$ then
\begin{equation*}
uq^k = uq^{k_1} + dh = uq^{k_1} + \tfrac{h}{s}(uq^{k_2} - uq^{k_1}) ,
\end{equation*}
for some integer $h$, hence
\begin{equation}\label{equ:q_k}
q^k = (1 - \tfrac{h}{s}) q^{k_1} + \tfrac{h}{s}q^{k_2} .
\end{equation}
On the one hand, if $i \in J_1$ then we can assume $k_1 \equiv k_2$ (mod $m$), thus $q^{k_1}$ and $q^{k_2}$ are linearly dependent over $\Q$.
From (\ref{equ:q_k}) it follows that also $q^k$ and $q^{k_1}$ are linearly dependent over $\Q$, i.e., $k \equiv k_1$ (mod $m$).
On the other hand, if $i \in J_2$ then we can assume $k_1 \not\equiv k_2$ (mod $m$), thus $q^{k_1}$ and $q^{k_2}$ are linearly independent over $\Q$.
From (\ref{equ:q_k}) it follows that necessarily $h = 0$ and $k=k_1$, or $h = s$ and $k = k_2$.
To summarize, we have found that for a fixed $i \in J_1$ it results that all the $uq^k \in \A_i \cap \G^{(n)}$ have $k$ in the same class modulo $m$, while for $i \in J_2$ we have $|\A_i \cap \G^{(n)}| \leq 2$.
As a consequence, if 
\begin{equation*}
R := \Big\{k_1 \in \{0,1,\ldots,m-1\} : uq^k \in \bigcup_{i \in J_1} (\A_i \cap \G^{(n)}) \mbox{ for some } k \equiv k_1 \!\!\! \pmod m\Big\} ,
\end{equation*}
then $|J_1| \geq |R|$. 
Also, if $k_1 \in \{0,1,\ldots,m-1\} \setminus R$ and $uq^k \in \G^{(n)}$, with $k \equiv k_1$ (mod $m$), then $uq^k \notin \bigcup_{i \in J_1} (\A_i \cap \G^{(n)})$.
But $uq^k \in \bigcup_{i \in J} (\A_i \cap \G^{(n)})$ and so $uq^k \in \bigcup_{i \in J_2} (\A_i \cap \G^{(n)})$.
Thus, it follows that
\begin{equation}\label{equ:inclusion}
\bigcup_{k_1 \in \{0,1,\ldots,m-1\} \setminus R} \{uq^k \in \G^{(n)} : k \equiv k_1 \!\!\! \pmod m\} \subseteq \bigcup_{i \in J_2} (\A_i \cap \G^{(n)}) .
\end{equation}
The set on the left hand side of (\ref{equ:inclusion}) is an union of $(m - |R|)$ pairwise disjoint sets, each of them has at least $\lfloor n / m \rfloor$ elements, so it has at least
\begin{equation*}
\lfloor n / m \rfloor (m - |R|) \geq 2(m - |R|)
\end{equation*}
elements. This and $(\ref{equ:inclusion})$ yield
\begin{equation*}
2 (m - |R|) \leq \left|\bigcup_{i \in J_2} (\A_i \cap \G^{(n)})\right| \leq \sum_{i \in J_2} |\A_i \cap \G^{(n)}| \leq 2|J_2| ,
\end{equation*}
so that $|J_2| \geq m - |R|$.
In conclusion,
\begin{equation*}
a = |J| = |J_1| + |J_2| \geq |R| + (m - |R|) = m ,
\end{equation*}
hence $a = m$. This completes the proof of Theorem \ref{thm:arithmetic_rational_cover}.

Suppose now that $q$ is not of the form $r^{1/m}$, with $r > 1$ a rational number and $m \in \N$.
From Theorem \ref{thm:intersection} it follows that $|\A_i \cap \G| \leq 6$ for all $i \in J$.
Then, for all $n \in \N$,
\begin{equation*}
n = |\G^{(n)}| \leq \sum_{i=1}^a |\A_i \cap \G^{(n)}| \leq \sum_{i=1}^a |\A_i \cap \G| \leq 6a ,
\end{equation*}
and so $a \geq n / 6$. This completes the proof of Theorem \ref{thm:arithmetic_irrational_cover}.

\subsection*{Acknowledgements}
The author is grateful to Paolo Leonetti (Universit\`a Bocconi), for having attracted his attention on a particular case of Theorem \ref{thm:geometric_cover}, and also to Salvatore Tringali (LJLL, Universit\'e Pierre et Marie Curie), for suggestions which improves the readability of the paper.

\bibliographystyle{amsalpha}
\providecommand{\bysame}{\leavevmode\hbox to3em{\hrulefill}\thinspace}
\providecommand{\MR}{\relax\ifhmode\unskip\space\fi MR }

\providecommand{\href}[2]{#2}

\end{document}